\newcommand{\Z}{\mathbb{Z}}
\newcommand{\R}{\mathbb{R}}
\newcommand{\C}{\mathbb{C}}
\newcommand{\He}{\operatorname{H}}
\newcommand{\Het}{\mathcal{H}}
\newcommand\D{\operatorname{D}}
\newcommand\Ha{\mathbb{H}}
\renewcommand\theta{\vartheta}
\newtheorem{theorem}{Theorem}[section]
\newtheorem{lemma}[theorem]{Lemma}
\newtheorem{remark}[theorem]{Remark}
\numberwithin{equation}{section}
\title{Rank-Crank type PDE's for higher level Appell functions}
\date{\today}
\author{Sander Zwegers}
\address{School of Mathematical Sciences, University College Dublin, Belfield, Dublin 4, Ireland}
\email{sander.zwegers@ucd.ie}
\subjclass[2000]{11F11, 11F27, 11F50}
\begin{document}
\begin{abstract}
In this paper we consider level $l$ Appell functions, and find a partial differential equation for all odd $l$. For $l=3$ this recovers the Rank-Crank PDE, found by Atkin and Garvan, and for $l=5$ we get a similar PDE found by Garvan.
\end{abstract}
\maketitle
\section{Introduction and statement of results}
Dyson in \cite{Dy} introduced the \textit{rank} of a partition, to explain the first two of the three Ramanujan-congruences
\begin{equation} \label{Ramanujan}
\begin{split}
p(5n+4) & \equiv 0 \pmod 5,\\
p(7n+5) & \equiv 0 \pmod 7, \\
p(11n+6) & \equiv 0 \pmod {11}.
\end{split}
\end{equation}
Here $p(n)$ denotes the number of partitions of $n$. He defined the rank of a partition as the largest part minus the number of its parts and conjectured that the partitions of $5n+4$ (resp.\ $7n+5$) form
$5$ (resp.\ $7$) groups of equal size when sorted by their ranks
modulo $5$ (resp.\ $7$). This was later proven by  
Atkin and Swinnerton-Dyer in \cite{AS}. We are interested here in the generating function
\begin{equation*}
R(w;q):= \sum_{\lambda}
w^{\operatorname{rank}(\lambda)} q^{||\lambda ||}
= \frac{(1-w)}{(q)_{\infty}} \sum_{n \in \Z}
\frac{(-1)^n q^{\frac{n}{2}(3n+1) }}{1-wq^n},
\end{equation*}
where $(q)_{\infty}:=
\prod_{n=1}^\infty (1-q^n)$. In the first sum the $\lambda$ run over all partitions, $\operatorname{rank}(\lambda)$ denotes the rank of $\lambda$ and $||\lambda ||$ denotes the size of the partition (the sum of all the parts).

Another partition statistic is the so called \textit{crank} of a partition. For the generating function we have
\begin{equation*}
\begin{split}
C(w;q) := \sum_{\lambda}
w^{\operatorname{crank}(\lambda)} q^{||\lambda ||} &= \prod_{n=1}^{\infty} \frac{  \left( 1-q^n\right) }{  \left( 1-wq^n\right)\left(1-w^{-1} q^n \right)  }\\
&= (1-w) \frac{(q)_\infty^2}{\sum_{n\in\Z} (-1)^n q^{\frac{1}{2}n(n-1)} w^n}.
\end{split}
\end{equation*}
The crank was introduced by Andrews and Garvan in \cite{AnG} to explain the Ramanujan congruence (\ref{Ramanujan}) with modulus $11$. 

In the setting of Jacobi forms it is more natural to consider the following modified rank and crank generating functions
\begin{equation*}
\begin{split}
\mathcal{R}(z;\tau)&:=\frac{w^{1/2} q^{-1/24}}{1-w} R(w,q),\\
\mathcal{C}(z;\tau)&:=\frac{w^{1/2} q^{-1/24}}{1-w} C(w,q).
\end{split}
\end{equation*}
Here we use $w=\exp(2\pi iz)$ and $q=\exp(2\pi i\tau)$, with $z\in\C$ and $\tau$ in the complex upper half plane $\Ha$.
\begin{remark}
$\mathcal{C}$ is a meromorphic Jacobi form of weight $1/2$ and index $-1/2$ and in \cite{Zw} it is shown that $\mathcal{R}$ is mock Jacobi form of weight $1/2$ and index $-3/2$.
\end{remark}

The two (modified) generating functions are related by a partial differential equation, which we will refer to as the Rank-Crank PDE.

\begin{theorem}[see \cite{AG}]
If we define the heat operator $\He$ by
\begin{equation*}
\He := \frac{3}{\pi i} \frac{\partial}{\partial \tau} + \frac{1}{(2\pi i)^2} \frac{\partial^2}{\partial z^2}, 
\end{equation*}
then
\begin{equation*}
\He \mathcal{R} = 2 \eta^2 \mathcal{C}^3,
\end{equation*}
where $\eta$ is the Dedekind $\eta$-function, given by $\eta(\tau) = q^{1/24} (q)_\infty$.
\end{theorem}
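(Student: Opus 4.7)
The plan is to rewrite both sides as explicit meromorphic functions of $z$ for fixed $\tau \in \Ha$ and then close via an elliptic function (Liouville-type) argument. Substituting the Lambert series for $R(w;q)$ gives
\begin{equation*}
\mathcal{R}(z;\tau) = \frac{w^{1/2}}{\eta(\tau)}\sum_{n\in\Z}\frac{(-1)^n q^{n(3n+1)/2}}{1-wq^n},
\end{equation*}
which is (up to the factor $\eta^{-1}$) a level $3$ Appell--Lerch sum. Applying the Jacobi triple product to the product formula for $C(w;q)$ yields $\mathcal{C}(z;\tau) = -i\,\eta(\tau)^2/\theta(z;\tau)$, where $\theta$ denotes the standard odd Jacobi theta function, so the identity to prove becomes
\begin{equation*}
\He\mathcal{R}(z;\tau) = \frac{2i\,\eta(\tau)^{8}}{\theta(z;\tau)^{3}}.
\end{equation*}

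Next I would apply $\He$ to the series for $\mathcal{R}$ term by term. The constant $3/(\pi i)$ in front of $\partial_\tau$ is calibrated to the level-$3$ exponent $n(3n+1)/2$: $\partial_\tau$ pulls out a factor involving $n(3n+1)$, while $(2\pi i)^{-2}\partial_z^2$ applied to $w^{1/2}/(1-wq^n)$ produces a factor involving $(2n+1)^2$. After combining, the polynomial-in-$n$ prefactors should reorganize via index shifts $n \mapsto n\pm 1$ into contributions that either telescope or collapse into derivatives of a single theta quotient.

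For the endgame I would invoke elliptic function theory. Both $\He\mathcal{R}$ and $2\eta^2\mathcal{C}^3$ are meromorphic in $z$, transform identically under the translations $z\mapsto z+1$ and $z\mapsto z+\tau$ (they have weight $5/2$ and index $-3/2$, and $\He$ is the heat operator compatible with this index), and have their only singularities at $z\in\Z+\Z\tau$, where the pole is of order exactly $3$ (triple from $\theta^{-3}$ on the right, and from $\partial_z^2$ acting on the simple poles of $\mathcal{R}$ on the left). Multiplying through by $\theta(z;\tau)^3$ therefore clears all singularities, producing an entire function of $z$ that is elliptic and hence independent of $z$; matching the principal parts of both sides at $z=0$ up to order $3$ pins down this $\tau$-dependent constant, and comparing one Fourier coefficient in $\tau$ completes the proof.

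The main obstacle will be the explicit algebraic cancellation in the second step. The heat operator is tuned so that the non-elliptic pieces from $\partial_\tau$ and $\partial_z^2$ cancel against one another --- this is essentially why $\He$ annihilates the non-holomorphic completion of $\mathcal{R}$, turning a mock Jacobi form into a genuine one --- but making this cancellation transparent requires careful regrouping of Lambert series, shifting summation indices, and recognizing remainder terms as derivatives of theta quotients. An alternative route that sidesteps most of this computation is to skip step two and close directly by the Liouville argument of step three, using only the principal-part expansion of $\He\mathcal{R}$ near $z=0$ together with the fact that the difference $\He\mathcal{R} - 2\eta^2\mathcal{C}^3$ lies in a finite-dimensional space of weak Jacobi forms.
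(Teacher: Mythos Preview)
Your approach is essentially the same as the paper's: both arguments hinge on the Liouville-type step of multiplying by $\theta^3$ to obtain a holomorphic doubly-periodic function of $z$, then reading off the constant from the principal part at $z=0$. The paper works with $A_3$ and transfers to $\mathcal{R}$ at the end, whereas you work with $\mathcal{R}$ directly, but this is cosmetic.

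One point deserves more care. You assert that $\He\mathcal{R}$ and $2\eta^2\mathcal{C}^3$ ``transform identically under $z\mapsto z+\tau$'' because they have index $-3/2$ and $\He$ is the compatible heat operator. But $\mathcal{R}$ is only a \emph{mock} Jacobi form: the underlying Appell sum $A_3$ does \emph{not} satisfy the elliptic transformation law of index $-3/2$; there is an explicit obstruction (the paper's equation for $A_l(z)+e^{-2\pi ilz-\pi il\tau}A_l(z+\tau)$, specialized to $l=3$, which produces a term $e^{\pi iz-\pi i\tau/12}\,\theta_{3,1}(\tau)=e^{\pi iz-\pi i\tau/12}\,\eta(\tau)$). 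The reason $\He\mathcal{R}$ \emph{does} transform correctly is that $\He$ kills this obstruction, which in the paper's language is the identity $\D_{1/2}\eta=0$. Your remark that ``$\He$ annihilates the non-holomorphic completion'' is the right intuition for why this should happen, but it is not a proof; the paper supplies the missing computation by evaluating the shift explicitly. Once that is in place, your endgame goes through exactly as written (and the extra ``one Fourier coefficient in $\tau$'' check is unnecessary: matching the full principal part at $z=0$ already determines the $\tau$-dependent constant).
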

Note that the identity found in \cite{AG} is slightly different, because they use a different normalization. However, the two are easily seen to be equivalent. In \cite{BZ} it is explained how the Rank-Crank PDE arises naturally in the setting of certain non-holomorphic Jacobi forms and a generalization is given to  partial differential equations for an infinite family of related functions.

The method in \cite{BZ} works only in certain special cases and no results are found for the level $l$ Appell functions
\begin{equation}\label{defA}
A_l (z;\tau):= w^{l/2} \sum_{n\in\Z} \frac{(-1)^{ln} q^{\frac{l}{2}n(n+1)}}{1-wq^n} \qquad l\in\Z_{>0},
\end{equation}
for values of $l$ higher than 3.

Garvan (\cite{Ga}), however, found the following PDE for a level 5 Appell function
\begin{theorem}[Garvan]\label{Garvan}
Let
\begin{equation*}
G_5 (z;\tau) := \frac{A_5(z;\tau)}{\eta(\tau)^3} ,
\end{equation*}
and define the heat operator
\begin{equation*}
\He := \frac{5}{\pi i} \frac{\partial}{\partial \tau} + \frac{1}{(2\pi i)^2} \frac{\partial^2}{\partial z^2}, 
\end{equation*}
then
\begin{equation*}
\left(\He^2 -E_4\right) G_5 = 24 \eta^2 \mathcal{C}^5,
\end{equation*}
where $E_4$ is the usual Eisenstein series
\begin{equation*}
E_4(\tau) = 1 + 240 \sum_{n=1}^\infty \Bigl(\sum_{d |n} d^3 \Bigr) q^n.
\end{equation*}
\end{theorem}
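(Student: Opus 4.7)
The plan is to compare both sides as meromorphic functions of $z$ on the torus $\C/(\Z\tau+\Z)$ and then apply a rigidity argument. Both sides have poles only at lattice points. From (\ref{defA}), $G_5=A_5/\eta^3$ has a simple pole at each $z\in\Z\tau+\Z$; at $z=0$ this pole comes from the $n=0$ summand $1/(1-w)$. The product formula for $C(w;q)$ shows that $\mathcal{C}(z;\tau)$ has a simple pole at every lattice point as well, so $\mathcal{C}^5$ has a pole of order $5$ there. Since $\He=(5/\pi i)\partial_\tau+(2\pi i)^{-2}\partial_z^2$, two iterations of $\He$ produce (through $\partial_z^4$ acting on $1/(1-w)$) a pole of order at most $5$, matching the right-hand side.

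Next I would match principal parts at $z=0$. Using the Laurent expansion $1/(1-w)=-(2\pi iz)^{-1}+\tfrac12+O(z)$ together with the Taylor expansion of the odd Jacobi theta function $\vartheta(z;\tau)$ around $z=0$---whose even coefficients are polynomials in $E_2,E_4,E_6$ via $\vartheta'(0;\tau)=-2\pi\eta^3$ and Ramanujan's identities
\begin{equation*}
\D E_2=\frac{E_2^2-E_4}{12},\quad \D E_4=\frac{E_2E_4-E_6}{3},\quad \D E_6=\frac{E_2E_6-E_4^2}{2},
\end{equation*}
with $\D=(2\pi i)^{-1}\partial_\tau$---one expands both $(\He^2-E_4)G_5$ and $24\eta^2\mathcal{C}^5$ as Laurent series in $z$. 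The term $-E_4$ in the operator is present precisely to cancel a spurious $E_4\cdot 1/(1-w)$ contribution produced when $(5/\pi i)^2\partial_\tau^2$ differentiates $\eta^{-3}$ twice; after this cancellation, the Ramanujan relations reorganize the remaining coefficients into the expansion of $24\eta^2\mathcal{C}^5$ through order $z^{-1}$.

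Once the principal parts match, $\Delta:=(\He^2-E_4)G_5-24\eta^2\mathcal{C}^5$ is entire in $z$. Combining the (near-)elliptic transformation of $A_5$ under $z\mapsto z+\tau$ with the exact elliptic transformation of $\eta^2\mathcal{C}^5$, one verifies that $\Delta$ transforms as a holomorphic Jacobi form of weight $7/2$ and appropriate index; the corresponding space of weak Jacobi forms is finite-dimensional, and vanishing can be checked on the first few $q$-coefficients. I expect the main obstacle to be the careful Laurent-expansion bookkeeping for $\He^2 G_5$: disentangling the contributions of $\partial_\tau^2$, $\partial_z^2\partial_\tau$, and $\partial_z^4$ applied to $\eta(\tau)^{-3}/(1-w)$ and re-expressing everything in terms of $E_2,E_4,E_6$ before the term-by-term comparison with $24\eta^2\mathcal{C}^5$.
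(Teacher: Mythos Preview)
Your overall skeleton---compare both sides as meromorphic functions of $z$, match principal parts at $z=0$, and use rigidity---is the same as the paper's. The gap is in the step you pass over with ``one verifies that $\Delta$ transforms as a holomorphic Jacobi form''.

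Since $\mathcal{C}$ is a genuine meromorphic Jacobi form, this amounts to asking whether $(\He^2-E_4)G_5$ itself satisfies the exact elliptic law
\[
P(z+1;\tau)=e^{-10\pi iz-5\pi i\tau}P(z+\tau;\tau)=-P(z;\tau).
\]
It does not follow from general principles. The ``near-elliptic'' transformation of $A_5$ (equation \eqref{shift}) leaves an explicit obstruction
\[
A_5(z;\tau)+e^{-10\pi iz-5\pi i\tau}A_5(z+\tau;\tau)=-\sum_{r=0}^{4}e^{2\pi i(r-5/2)z-\frac{\pi i}{5}(r-5/2)^2\tau}\,\theta_{5,r}(\tau),
\]
and each application of the heat operator converts $\theta_{5,r}$ into $\D\theta_{5,r}$, $\D^2\theta_{5,r}$, etc. The obstruction for $(\He^2-E_4)G_5$ is therefore a specific linear combination of $\theta_{5,r}$, $\D\theta_{5,r}$, $\D^2\theta_{5,r}$, and the entire theorem rests on showing that this combination vanishes identically in $\tau$. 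That is precisely Lemma~\ref{lem} for $l=5$; it is what forces the coefficient to be $-E_4$ rather than any other weight-$4$ form. Your explanation that $-E_4$ is there ``to cancel a spurious $E_4/(1-w)$ contribution from differentiating $\eta^{-3}$ twice'' concerns only the principal part at $z=0$; it is a separate constraint and cannot substitute for the vanishing of the elliptic obstruction.

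Once that obstruction is shown to vanish, the remainder of your plan coincides with the paper's: multiply by $\theta(z;\tau)^5$ to get a doubly periodic holomorphic function of $z$, hence a constant, and read off that constant from the Laurent expansion at $z=0$. No finite-dimensionality argument for weak Jacobi forms is needed---and invoking one would require the modular transformation of $\Delta$, which has the same unresolved issue, since $G_5$ is only a mock Jacobi form.
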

Note that the identity found by Garvan is slightly different, because he uses a different normalization. However, the two are easily seen to be equivalent.

This theorem is a special case of the following
\begin{theorem}\label{main}
Let $l$ be an odd positive integer. Define
\begin{equation*}
\begin{split}
\Het_k &:= \frac{l}{\pi i} \frac{\partial}{\partial \tau} + \frac{1}{(2\pi i)^2} \frac{\partial^2}{\partial z^2}-\frac{l(2k-1)}{12} E_2,\\
\Het^k &:= \Het_{2k-1} \Het_{2k-3} \cdots \Het_{3} \Het_{1},
\end{split}
\end{equation*}
where $E_2 (\tau) = 1 -24 \sum_{n=1}^\infty \Bigl(\sum_{d |n} d \Bigr) q^n$ is the usual Eisenstein series in weight 2. Then there exist holomorphic modular forms $f_j$ ($j=0,2,4,\ldots ,l-1$) on $\operatorname{SL}_2(\Z)$ of weight $j$, such that
\begin{equation*}
\sum_{k=0}^{(l-1)/2} f_{l-2k-1} \Het^k A_l = (l-1)! f_0 \eta^l \mathcal{C}^l.
\end{equation*}
\end{theorem}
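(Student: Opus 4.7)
The plan is to interpret both sides as meromorphic Jacobi forms of weight $l$ and index $-l/2$ on $\mathrm{SL}_2(\Z)$, exploit the mock-modular structure of $A_l$, and reduce the identity to the fact that a holomorphic Jacobi form of negative index must vanish.

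\textit{Step 1 (Modular framework).} By Zwegers' work, $A_l$ admits a non-holomorphic completion $\widehat{A}_l = A_l + R_l$ which transforms as a meromorphic Jacobi form of weight $1$ and index $-l/2$ under the full modular group, with the same multiplier system as $\eta^l$; the shadow $\partial_{\bar\tau} R_l$ is an explicit combination of unary theta series of level $l$. Each operator $\Het_k$ is precisely the Ramanujan-type modular heat operator on meromorphic Jacobi forms of weight $2k-1$ and index $-l/2$: the term $-\frac{l(2k-1)}{12}E_2$ cancels the quasi-modular anomaly of $\frac{l}{\pi i}\partial_\tau$. Consequently $\Het^k \widehat{A}_l$ is a meromorphic Jacobi form of weight $2k+1$ and index $-l/2$, so for any choice of $f_j \in M_j(\mathrm{SL}_2(\Z))$ the sum $\sum_k f_{l-2k-1}\, \Het^k \widehat{A}_l$ is a meromorphic Jacobi form of weight $l$ and index $-l/2$. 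The right-hand side $(l-1)!\, f_0\, \eta^l \mathcal{C}^l$ manifestly has the same type.

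\textit{Step 2 (Polar matching).} The function $A_l$ has a simple pole at $z = 0$ with residue $-1/(2\pi i)$, so $\Het^k A_l$ has a pole of order $2k+1$ there; the function $\eta^l \mathcal{C}^l$ has a pole of order $l$ at $z = 0$. Both sides are odd in $z$, so only Laurent coefficients $z^{-l}, z^{-(l-2)}, \ldots, z^{-1}$ appear. Matching these coefficients recursively determines $f_0, f_2, \ldots, f_{l-1}$, each step solving an equation whose coefficients are modular forms on $\mathrm{SL}_2(\Z)$ of the appropriate weights (the combinatorial constant $(l-1)!$ arises because $\Het^{(l-1)/2}$ hitting $1/z$ produces $(l-1)!/z^{l}$).

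\textit{Step 3 (Vanishing of the difference).} With the $f_j$ chosen to match all principal parts at $z=0$ and simultaneously to cancel shadows (see below), the difference $\sum_k f_{l-2k-1} \Het^k \widehat{A}_l - (l-1)!\, f_0\, \eta^l \mathcal{C}^l$ becomes a holomorphic Jacobi form of weight $l$ and index $-l/2$ on $\mathrm{SL}_2(\Z)$. Since holomorphic Jacobi forms of negative index vanish identically, the identity follows; and since the shadow vanishes, the identity descends from $\widehat{A}_l$ to $A_l$.

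\textit{Main obstacle.} The crux is the simultaneous solubility of pole matching and shadow cancellation: the $f_j$'s determined by the polar conditions must also annihilate the weighted sum of shadows $\sum_k f_{l-2k-1}\, \Het^k R_l$. These are a priori independent requirements, and compatibility must be verified by an explicit computation of the shadow of $\widehat{A}_l$ and of how $\Het_k$ interacts with its theta-integral representation. In the level-$3$ Rank-Crank case one finds $f_0 = 1$ with no other $f_j$; in the level-$5$ case of Theorem \ref{Garvan} the compatibility is achieved by $f_4 = -E_4$. The general theorem asserts that for every odd $l$ one can find modular forms on $\mathrm{SL}_2(\Z)$ of weights $0, 2, \ldots, l-1$ satisfying both constraints, and locating them in the (finite-dimensional) relevant space of vector-valued modular forms is the main work.
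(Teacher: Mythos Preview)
Your outline has the right instincts but contains a concrete error in Step~1 and leaves the decisive step undone. You assert that $\Het^k\widehat{A}_l$ is a Jacobi form of weight $2k+1$; but $\Het_k$ is built from the \emph{holomorphic} $E_2$, and applying it to the non-holomorphic completion $\widehat{A}_l$ does not yield something with Jacobi transformation behaviour (for that you would need the completed $E_2^*=E_2-3/(\pi\,\Im\tau)$, and then you must reconcile with the theorem's holomorphic $\Het_k$). More seriously, you correctly flag the ``main obstacle''---that the pole-matching and shadow-cancellation constraints must be simultaneously solvable by holomorphic modular forms on $\operatorname{SL}_2(\Z)$---and then stop. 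That compatibility is precisely the content of the theorem, and your proposal offers no mechanism for establishing it. (Incidentally, pole matching alone cannot determine modular $f_j$: the subleading Laurent coefficients of $A_l$ at $z=0$ are not modular, so Step~2 as written does not produce elements of $M_j$.)

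The paper takes a more elementary and quite different route, using no completion and no $\tau$-modularity at all---only the elliptic behaviour in $z$. One computes directly that
\[
A_l(z;\tau)+e^{-2\pi ilz-\pi il\tau}A_l(z+\tau;\tau)
=-\sum_{r=0}^{l-1} e^{2\pi i(r-l/2)z-\frac{\pi i}{l}(r-l/2)^2\tau}\,\theta_{l,r}(\tau),
\]
and that $\Het^k$ converts each $\theta_{l,r}$ on the right into $(2l)^k\,\D^k\theta_{l,r}$, where $\D_k=\frac{1}{2\pi i}\partial_\tau-\frac{k}{12}E_2$ is the Serre derivative. The crux is then a purely one-variable lemma: for odd $l$ there exist $F_j\in M_j(\operatorname{SL}_2(\Z))$ with $\sum_k F_{l-2k-1}\D^k\theta_{l,r}=0$ for every $r$. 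This is proved by packaging the $(l-1)/2$ independent $\theta_{l,r}$ into a vector $\Theta_l$, forming the square matrix $T_l=(\Theta_l\;\D\Theta_l\;\cdots\;\D^{(l-3)/2}\Theta_l)$, and showing $\det T_l=c\,\eta^{(l-1)(l-2)/2}$ with $c\neq 0$ via a Vandermonde determinant in the leading $q$-exponents; invertibility of $T_l$ then \emph{defines} the $F_j$ (and their modularity follows from that of $T_l$ and $\D^{(l-1)/2}\Theta_l$). With $f_{l-2k-1}=(2l)^{-k}F_{l-2k-1}$, the sum $P=\sum_k f_{l-2k-1}\Het^k A_l$ satisfies the full elliptic transformations of index $-l/2$, so $\theta(z;\tau)^l P$ is holomorphic and doubly periodic in $z$, hence constant in $z$; the pole at $z=0$ identifies that constant as $-(l-1)!f_0\eta^{3l}$, giving the identity. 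Note the logical order: the $f_j$ are fixed by the obstruction lemma, not by pole matching---the poles are examined only at the end to read off the right-hand side.
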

\begin{remark}
In the proof of the theorem we will see an explicit construction for the $f_j$'s for given $l$.
\end{remark}
In the next section we will proof Theorem \ref{main} and in section 3 we will look at the first few cases and in particular we'll see that the theorem for $l=5$ is equivalent to Theorem \ref{Garvan}.

\section*{Acknowledgements}
The author wishes to thank Frank Garvan and Karl Mahlburg for helpful discussions.

\section{Proof of Theorem \ref{main}}
Throughout we assume that $l$ is an odd positive integer. We (trivially) have
\begin{equation}\label{sh}
A_l(z+1;\tau)= - A_l(z;\tau),
\end{equation}
and if we replace $z$ by $z+\tau$ and $n$ by $n-1$ in \eqref{defA} we find
\begin{equation*}
e^{-2\pi i lz-\pi i l\tau} A_l(z+\tau;\tau) = - w^{-l/2} \sum_{n\in\Z} \frac{(-1)^{n} q^{\frac{l}{2}n(n-1)}}{1-wq^n},
\end{equation*}
and so
\begin{equation}\label{shift}
\begin{split}
A_l(z;\tau)&+e^{-2\pi i lz-\pi i l\tau} A_l(z+\tau;\tau)= - w^{-l/2} \sum_{n\in\Z} \frac{(-1)^{n} q^{\frac{l}{2}n(n-1)}}{1-wq^n} \left(1-w^l q^{ln} \right)\\
&= - w^{-l/2} \sum_{n\in\Z} (-1)^{n} q^{\frac{l}{2}n(n-1)} \sum_{r=0}^{l-1} w^r q^{nr}\\
&= - \sum_{r=0}^{l-1} w^{r-l/2} q^{-\frac{1}{2l}(r-l/2)^2} \sum_{n\in\Z} (-1)^n q^{\frac{l}{2}(n-1/2+r/l)^2}\\
&= - \sum_{r=0}^{l-1} e^{2\pi i(r-l/2)z -\frac{\pi i}{l} (r-l/2)^2 \tau} \theta_{l,r}(\tau),
\end{split}
\end{equation}
with
\begin{equation*}
\theta_{l,r}(\tau) := \sum_{n\in\Z} (-1)^n q^{\frac{l}{2}(n-1/2+r/l)^2}.
\end{equation*}
It is easy to check that 
\begin{equation*}
\left(\frac{l}{\pi i} \frac{\partial}{\partial \tau} + \frac{1}{(2\pi i)^2} \frac{\partial^2}{\partial z^2}\right) e^{2\pi i(r-l/2)z -\frac{\pi i}{l} (r-l/2)^2 \tau}=0,
\end{equation*}
and that for functions $F:\C \times \Ha \to \C$
\begin{equation*}
\begin{split}
\Het_k \left( F(z+1;\tau) \right)&= \left( \Het_k F\right)(z+1;\tau),\\
\Het_k \left( e^{-2\pi ilz-\pi il\tau} F(z+\tau;\tau)\right)&=
e^{-2\pi ilz-\pi il\tau}\left(\Het_k F\right)(z+\tau;\tau),
\end{split}
\end{equation*}
with $\Het_k$ as in the theorem. Hence we get from applying $\Het_{1}$ to equations \eqref{sh} and \eqref{shift}
\begin{equation*}
(\Het_{1} A_l)(z+1;\tau)= - (\Het_{1} A_l)(z;\tau),
\end{equation*}
and
\begin{equation*}
\begin{split} 
(\Het_{1} A_l)(z;\tau)\ +\ &e^{-2\pi i lz-\pi i l\tau} (\Het_{1} A_l)(z+\tau;\tau)\\
&=- 2l\sum_{r=0}^{l-1} e^{2\pi i(r-l/2)z -\frac{\pi i}{l} (r-l/2)^2 \tau} (\D_{1/2} \theta_{l,r})(\tau),
\end{split}
\end{equation*}
with the operator $\D_k$ defined by
\begin{equation*}
\D_k := \frac{1}{2\pi i} \frac{\partial}{\partial \tau} - \frac{k}{12} E_2.
\end{equation*}
If we now apply $\Het_3$, $\Het_5$, \ldots, upto $\Het_{2k-1}$ we find
\begin{equation}\label{tr1}
(\Het^k A_l)(z+1;\tau)= - (\Het^k A_l)(z;\tau),
\end{equation}
and 
\begin{equation}\label{tr2}
\begin{split} 
(\Het^k A_l)(z;\tau)+&e^{-2\pi i lz-\pi i l\tau} (\Het^k A_l)(z+\tau;\tau)\\
&=- (2l)^k \sum_{r=0}^{l-1} e^{2\pi i(r-l/2)z -\frac{\pi i}{l} (r-l/2)^2 \tau} (\D^k \theta_{l,r})(\tau),
\end{split}
\end{equation}
with
\begin{equation*}
\D^k := \D_{2k-3/2} \D_{2k-7/2} \cdots \D_{5/2} \D_{1/2}.
\end{equation*}
We need the following
\begin{lemma}\label{lem}
Let $l$ be an odd positive integer, then there exist holomorphic modular forms $F_j$ ($j=0,2,4,\ldots ,l-1$) on $\operatorname{SL}_2(\Z)$ of weight $j$, such that
\begin{equation}\label{eqlem}
\sum_{k=0}^{(l-1)/2} F_{l-2k-1} \D^k \theta_{l,r} = 0
\end{equation}
for all $r\in\Z$.
\end{lemma}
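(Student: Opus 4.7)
The plan is to recognize the functions $\theta_{l,r}$ as components of a vector-valued modular form on $\mathrm{SL}_2(\Z)$, and to deduce \eqref{eqlem} from a rank count in the associated graded module.

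As a preliminary step I would record the elementary symmetries $\theta_{l,-r} = -\theta_{l,r}$ and $\theta_{l,r+l} = -\theta_{l,r}$, both obtained by reindexing the defining sum. Since $l$ is odd these reduce \eqref{eqlem} to the $(l-1)/2$ functions $\theta_{l,r}$ with $1 \le r \le (l-1)/2$; the same symmetries then propagate the identity to every $r \in \Z$ (using also that $\theta_{l,0} = 0$). I would then assemble the column vector $\Theta_l(\tau) := (\theta_{l,r}(\tau))_{r=1}^{(l-1)/2}$: each component is a weight-$1/2$ holomorphic modular form on a congruence subgroup, and $\Theta_l$ transforms on $\mathrm{SL}_2(\Z)$ (or its metaplectic cover) via a $(l-1)/2$-dimensional representation $\rho$, diagonal under $T$ and given by a discrete Fourier transform under $S$. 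Because $\D_k$ sends weight-$k$ modular forms to weight-$(k+2)$ ones of the same modular type, each iterate $\D^k \Theta_l$ is itself a vector-valued modular form of weight $1/2 + 2k$ for the same $\rho$.

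The crux is then a rank count in the graded module $\mathcal{M}_*(\rho) := \bigoplus_w \mathcal{M}_w(\rho)$ over $M_* := \C[E_4,E_6]$. By the structure theorem for vector-valued modular forms (equivalently, by the Eichler-Zagier theta decomposition of Jacobi forms of index $l/2$), $\mathcal{M}_*(\rho)$ is free of rank $\dim \rho = (l-1)/2$. Hence the $(l+1)/2$ iterates $\Theta_l, \D\Theta_l, \ldots, \D^{(l-1)/2}\Theta_l$ must be $M_*$-linearly dependent. Assuming further that the first $(l-1)/2$ of them form an $M_*$-basis, the last iterate $\D^{(l-1)/2}\Theta_l$ is expressible as $\sum_{k=0}^{(l-3)/2} G_k \D^k \Theta_l$, and matching weights forces $G_k \in M_{l-1-2k}$. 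Setting $F_0 = 1$ and $F_{l-2k-1} = -G_k$ for $k < (l-1)/2$ yields \eqref{eqlem} componentwise in $r$.

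The main obstacle is verifying that $\D^0 \Theta_l, \ldots, \D^{(l-3)/2}\Theta_l$ really do form an $M_*$-basis of $\mathcal{M}_*(\rho)$. A cleaner self-contained approach, which also exhibits the $F_j$'s explicitly as promised in the Remark after the main theorem, is to compare dimensions directly: the domain $\bigoplus_{k=0}^{(l-1)/2} M_{l-2k-1}$ of the linear map $(F_k)_k \mapsto \sum_k F_{l-2k-1}\D^k\Theta_l$ has dimension exceeding $\dim \mathcal{M}_{l-1/2}(\rho)$ by exactly $\dim M_0 = 1$ (granted the generator weights $1/2 + 2i$, $i = 0, \ldots, (l-3)/2$), forcing a non-trivial kernel. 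The generator claim itself reduces to an $M_*$-independence check for the iterated Serre derivatives of $\Theta_l$, which can be carried out case-by-case by leading-order $q$-expansion.
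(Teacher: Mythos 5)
Your setup coincides with the paper's: the reduction to $1\le r\le (l-1)/2$ via the symmetries $\theta_{l,-r}=-\theta_{l,r}$, $\theta_{l,r+l}=-\theta_{l,r}$, the vector $\Theta_l$ transforming under a representation of $\operatorname{SL}_2(\Z)$ (diagonal under $T$, a sine-transform under $S$), and the fact that $\D_k$ preserves the modular type while raising the weight by $2$ are all exactly as in the paper. The difference lies in how the linear relation is produced, and this is where your argument has a genuine gap. The free-module/rank-count argument only yields \emph{some} non-trivial homogeneous $M_*$-relation among $\Theta_l,\D^1\Theta_l,\ldots,\D^{(l-1)/2}\Theta_l$; it does not guarantee a relation in the specific total weight $l-1/2$ (so that the coefficients land in the weights $l-1-2k$ demanded by the lemma --- note that one cannot always shift a relation up in weight, e.g.\ by $2$, on $\operatorname{SL}_2(\Z)$), nor that the coefficient $F_0$ of $\D^{(l-1)/2}\Theta_l$ is a non-zero constant; with $F_0=0$ the main theorem degenerates. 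You correctly identify that everything hinges on the $M_*$-independence of the first $(l-1)/2$ iterates --- equivalently, the invertibility of the matrix $T_l=\begin{pmatrix}\Theta_l & \D^1\Theta_l&\cdots &\D^{(l-3)/2}\Theta_l\end{pmatrix}$, which then produces the $F_j$ in the correct weights with $F_0=1$ by inverting $T_l$ --- but you leave this as an assumption, offering only a ``case-by-case'' leading-order check, which cannot establish the lemma for all odd $l$ at once.

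The paper closes precisely this gap with a uniform computation that your proposal is missing: from the transformation behaviour of $T_l$ one finds that $\det T_l$ transforms like $\eta^{(l-1)(l-2)/2}$, hence $\det T_l=\det(B)\,\eta^{(l-1)(l-2)/2}$, where $B_{ij}=\bigl((l-2i)^2/8l\bigr)^{j-1}$ is the matrix of leading Fourier coefficients. Since $B$ is a Vandermonde matrix with distinct nodes $(l-2i)^2/8l$, $1\le i\le (l-1)/2$, its determinant is non-zero, so $\det T_l$ is nowhere vanishing on $\Ha$. This single observation delivers everything at once: existence of the relation in the correct weights with $F_0=1$, holomorphy of the resulting $F_j$ on $\Ha$ (since $\eta$ has no zeros there), and, combined with the leading-order expansions of $T_l$ and $\D^{(l-1)/2}\Theta_l$, holomorphy at the cusp. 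It also sidesteps any appeal to the free-module structure theorem. If you replace your ``case-by-case'' step with this Vandermonde argument, your proof becomes complete and essentially identical to the paper's; as written, the key non-degeneracy is asserted rather than proved.
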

If we now define
\begin{equation*}
P= \sum_{k=0}^{(l-1)/2} f_{l-2k-1} \Het^k A_l,
\end{equation*}
with $f_{l-2k-1} = (2l)^{-k} F_{l-2k-1}$ and $F_j$ as in the lemma, then we see from equation \eqref{tr1} and \eqref{tr2}
\begin{equation}\label{trP}
P(z+1;\tau) = e^{-2\pi i lz-\pi i l\tau} P(z+\tau;\tau)= -P(z;\tau).
\end{equation}
Now consider the Jacobi theta function
\begin{equation*}
\begin{split}
\theta(z;\tau)&:= \sum_{n\in\Z} (-1)^n w^{n+1/2} q^{\frac{1}{2}(n+1/2)^2} \\
&= w^{1/2} q^{1/8} \prod_{n=1}^\infty (1-q^n)(1-wq^n)(1-w^{-1}q^{n-1})\\
&= -\frac{\eta(\tau)^2}{\mathcal{C}(z;\tau)}.
\end{split}
\end{equation*}
This function satisfies
\begin{equation}\label{trt}
  \theta(z+1;\tau)= e^{2\pi iz +\pi i\tau} \theta(z+\tau;\tau) = -\theta(z;\tau),
\end{equation}
$z\mapsto \theta(z;\tau)$ has simple zeros in $\Z \tau +\Z$ and 
\begin{equation}\label{nult}
\frac{1}{2\pi i} \left. \frac{\partial}{\partial z} \right|_{z=0} \theta(z;\tau) = \eta(\tau)^3.
\end{equation}
Since the  poles of $z\mapsto A_l(z;\tau)$ are simple poles in $\Z\tau+\Z$, the function $z\mapsto P(z;\tau)$ has poles of order $l$ in $\Z\tau+\Z$, and so the function 
\begin{equation*}
p(z;\tau):= \theta(z;\tau)^l P(z;\tau),
\end{equation*}
is a holomorphic function as a function of $z$.  Using \eqref{trP} and \eqref{trt} we find that
\begin{equation*}
p(z+1;\tau)=p(z+\tau;\tau)=p(z;\tau),
\end{equation*}
from which we get that $p$ is constant (as a function of $z$). To determine the constant, we consider the behaviour for $z\to 0$. From \eqref{defA} we easily see that for $z \to 0$
\begin{equation*}
A_l(z;\tau) = -\frac{1}{2\pi i}\frac{1}{z} + \mathcal{O}(1),
\end{equation*}
and so 
\begin{equation*}
P(z;\tau) = -f_0(\tau) \frac{(l-1)!}{(2\pi i)^l} \frac{1}{z^l} + \mathcal{O}\left(\frac{1}{z^{l-1}}\right).
\end{equation*}
Combining this with \eqref{nult} we see
\begin{equation*}
p(z;\tau) = -f_0(\tau) (l-1)!\eta(\tau)^{3l},
\end{equation*}
and so
\begin{equation*}
P(z;\tau) = -f_0(\tau) (l-1)! \frac{\eta(\tau)^{3l}}{\theta(z;\tau)^l}= (l-1)! f_0(\tau) \eta(\tau)^l \mathcal{C}(z;\tau)^l,
\end{equation*}
which finishes the proof.

\begin{proof}[Proof of Lemma \ref{lem}]
Throughout, let $l$ be an odd integer. Because of the trivial relations
\begin{equation*}
\begin{split}
\theta_{l,r+l} &= -\theta_{l,r}\\
\theta_{l,-r} &= -\theta_{l,r}
\end{split}
\end{equation*}
it suffices to consider $\theta_{l,r}$ for $r=1,2,\ldots, (l-1)/2$. Define
\begin{equation*}
\Theta_l= \begin{pmatrix} \theta_{l,1}\\ \theta_{l,2} \\ \vdots \\ \theta_{l,(l-1)/2} \end{pmatrix},
\end{equation*}
then $\Theta_l$ transforms as a (vector-valued) modular form of weight 1/2 on the full modular group $\operatorname{SL}_2(\Z)$:
\begin{equation*}
\begin{split}
\Theta_l (\tau+1) &= \operatorname{diag} \left( \zeta_{8l}^{(l-2j)^2} \right)_{1\leq j \leq (l-1)/2} \Theta_l(\tau),\\
\Theta_l (-1/\tau) &= (-1)^{(l+1)/2} \sqrt{\tau/li}\ (2\sin 2\pi rk/l)_{1\leq r,k \leq (l-1)/2}\ \Theta_l (\tau).
\end{split}
\end{equation*}
Using
\begin{equation*}
E_2\left(\frac{a\tau+b}{c\tau +d}\right) = (c\tau+d)^2 E_2 (\tau) +\frac{6}{\pi i}c(c\tau+d)\qquad\text{for}\  \left(\begin{smallmatrix}a&b\\c&d\end{smallmatrix}\right)\in \operatorname{SL}_2(\Z),
\end{equation*}
we can easily verify that
\begin{equation*}
\D_k \left( (c\tau+d)^{-k} f\left( \frac{a\tau+b}{c\tau+d}\right) \right) = (c\tau+d)^{-k-2} (\D_k f) \left(\frac{a\tau+b}{c\tau+d}\right),
\end{equation*}
and so
\begin{equation*}
\D^k \left( (c\tau+d)^{-1/2} \Theta_l \left(\frac{a\tau+b}{c\tau+d}\right)\right) = (c\tau+d)^{-2k-1/2} \left( \D^k \Theta_l\right) \left(\frac{a\tau+b}{c\tau+d}\right).
\end{equation*}

Now define the $(l-1)/2 \times (l-1)/2$-matrix
\begin{equation*}
T_l = \begin{pmatrix} \Theta_l & \D^1\Theta_l & \D^2 \Theta_l & \cdots & \D^{(l-3)/2} \Theta_l \end{pmatrix},
\end{equation*}
then $T_l$ transforms as a (matrix-valued) modular form on the full modular group $\operatorname{SL}_2(\Z)$:
\begin{equation}\label{modT}
\begin{split}
T_l (\tau+1) &= \operatorname{diag} \left( \zeta_{8l}^{(l-2j)^2} \right)_{1\leq j \leq (l-1)/2} T_l(\tau),\\
T_l (-1/\tau) &= (-1)^{(l+1)/2} \sqrt{\tau/li}\ (2\sin 2\pi rk/l)_{1\leq r,k \leq (l-1)/2}\ T_l (\tau)\operatorname{diag} \left( \tau^{2j-2} \right)_{1\leq j \leq (l-1)/2}.
\end{split}
\end{equation}
From this we see that
\begin{equation*}
\begin{split}
\det (T_l (\tau+1)) &= \zeta_{24}^{(l-1)(l-2)/2} \det(T_l(\tau)),\\
\det (T_l (-1/\tau)) &= (-i\tau)^{(l-1)(l-2)/4} \det(T_l(\tau)),
\end{split}
\end{equation*}
and so $\det(T_l)$ is a multiple of $\eta^{(l-1)(l-2)/2}$. We determine what that multiple is by looking at the lowest order terms: \\
First observe that by doing elementary column operations we get
\begin{equation*}
\det(T_l(\tau)) = \det \begin{pmatrix} \Theta_l & \partial_{\tau} \Theta_l & \partial_{\tau}^2 \Theta_l & \cdots & \partial_{\tau}^{(l-3)/2} \Theta_l \end{pmatrix},
\end{equation*}
with $\partial_{\tau} := \frac{1}{2\pi i} \frac{\partial}{\partial \tau}$.

For $1\leq r \leq (l-1)/2$ we have
\begin{equation*}
\theta_{l,r} (\tau) = q^{(l-2r)^2/8l} \left( 1+ \mathcal{O} (q) \right),
\end{equation*}
so 
\begin{equation*}
\begin{split}
&\begin{pmatrix} \Theta_l & \partial_{\tau} \Theta_l & \partial_{\tau}^2 \Theta_l & \cdots & \partial_{\tau}^{(l-3)/2} \Theta_l \end{pmatrix} \\
&\quad = \operatorname{diag} \left( q^{(l-2i)^2/8l}\right)_{1\leq i \leq (l-1)/2}\cdot \left( \Bigl(\frac{(l-2i)^2}{8l} \Bigr)^{j-1} +\mathcal{O}(q)\right)_{1\leq i,j \leq (l-1)/2},\\
\det & \begin{pmatrix} \Theta_l & \partial_{\tau} \Theta_l & \partial_{\tau}^2 \Theta_l & \cdots & \partial_{\tau}^{(l-3)/2} \Theta_l \end{pmatrix} \\
&\quad =q^{(l-1)(l-2)/48} \left( \det (B)+\mathcal{O}(q) \right),
\end{split}
\end{equation*}
and hence
\begin{equation}\label{det}
\det(T_l(\tau)) = \det(B)\ \eta(\tau)^{(l-1)(l-2)/2}, 
\end{equation}
with
\begin{equation*}
B_{ij} =\Bigl(\frac{(l-2i)^2}{8l} \Bigr)^{j-1} \qquad \text{for} \qquad {1\leq i,j \leq (l-1)/2}.
\end{equation*}
$B$ is a Vandermonde matrix: an $m\times n$ matrix $V$, such that $V_{ij}= \alpha_i^{j-1}$ with $\alpha_i\in\R$. Since a square Vandermonde matrix is invertible if and only if the $\alpha_i$ are distinct, we see that $B$ is invertible.  From \eqref{det} and the fact that $\eta$ has no zeros on $\Ha$ we then get that $T_l(\tau)$ is invertible for all $\tau\in\Ha$. We can rewrite the condition that \eqref{eqlem} holds for $1\leq r\leq (l-1)/2$ as
\begin{equation*}
T_l \begin{pmatrix} F_{l-1}\\ F_{l-3}\\ \vdots \\ F_2 \end{pmatrix} + F_0 \D^{(l-1)/2} \Theta_l =0.
\end{equation*}
If we take $F_0=1$ we get the other $F_j$'s by inverting $T_l$
\begin{equation}\label{defT}
\begin{split}
\begin{pmatrix} F_{l-1}\\ F_{l-3}\\ \vdots \\ F_2 \end{pmatrix} = -T_l^{-1} \D^{(l-1)/2} \Theta_l.
\end{split}
\end{equation}
What remains to be shown is that the $F_j$ found this way are holomorphic modular forms of weight $j$ on $\operatorname{SL}_2 (\Z)$. The modular transformation properties follow easy from \eqref{modT} and those of $\D^{(l-1)/2} \Theta_l$
\begin{equation*}
\begin{split}
\D^{(l-1)/2}\Theta_l (\tau+1) &= \operatorname{diag} \left( \zeta_{8l}^{(l-2j)^2} \right)_{1\leq j \leq (l-1)/2} \D^{(l-1)/2}\Theta_l(\tau),\\
\D^{(l-1)/2}\Theta_l (-1/\tau) &= (-1)^{(l+1)/2} \sqrt{\tau/li}\ \tau^{l-1}\ (2\sin 2\pi rk/l)_{1\leq r,k \leq (l-1)/2}\ \D^{(l-1)/2}\Theta_l (\tau).
\end{split}
\end{equation*}
Since $\det T_l$ has no zeros on $\Ha$ we get that $F_j$ is a holomorphic function on $\Ha$. That it also doesn't have a pole at infinity follows from
\begin{equation*}
\begin{split}
\D^{(l-1)/2} \Theta_l &= \operatorname{diag} \left( q^{(l-2i)^2/8l}\right)_{1\leq i \leq (l-1)/2}\cdot \begin{pmatrix} \mathcal{O}(1)\\ \mathcal{O}(1)\\ \vdots \\ \mathcal{O}(1)\end{pmatrix},\\
T_l &= \operatorname{diag} \left( q^{(l-2i)^2/8l}\right)_{1\leq i \leq (l-1)/2}\cdot \left( C_{ij} + \mathcal{O}(q) \right)_{1\leq i,j \leq (l-1)/2},
\end{split}
\end{equation*}
for some $(l-1)/2\times (l-1)/2$-matrix $C$, with
\begin{equation*}
\det (C) = \det (B) \not= 0.
\end{equation*}

\end{proof}

\section{Some examples}
From \eqref{defT} we can calculate the first few coefficients in the Fourier expansion of the $F_j$'s and since they are holomorphic modular forms on $\operatorname{SL}_2(\Z)$, that means that we can easily identify them.

For $l=3$, we have
\begin{equation*}
\Theta_l= \begin{pmatrix} \theta_{3,1} \end{pmatrix} = \begin{pmatrix} \eta \end{pmatrix}.
\end{equation*}
Using
\begin{equation*}
\D_{k/2} \left( \eta^k \right) =0,
\end{equation*}
which follows from
\begin{equation*}
E_2 = \frac{12}{\pi i} \frac{\eta'}{\eta},
\end{equation*}
we see
\begin{equation*}
D_{1/2} \Theta_l =0,
\end{equation*}
and so we find
\begin{equation*}
F_0(\tau) =1 \qquad \text{and} \qquad F_2(\tau)=0,
\end{equation*}
and 
\begin{equation*}
f_0(\tau) =1/6 \qquad \text{and} \qquad f_2(\tau)=0.
\end{equation*}
If we put this into Theorem \ref{main} and multiply by 6 we get
\begin{equation*}
\Het_{1} A_3 =2 \eta^3 \mathcal{C}^3.
\end{equation*}
Using
\begin{equation*}
\mathcal{R}(z;\tau) = \frac{A_3(z;\tau)}{\eta(\tau)} +e^{\pi iz -\pi i\tau/12},
\end{equation*}
we see
\begin{equation*}
\Het_{1/2} \mathcal{R} = \Het_{1/2} \left( \frac{A_3}{\eta} \right)= \frac{\Het_{1} A_3}{\eta} + 6 A_3 \D_{-1/2} \left( \frac{1}{\eta}\right)= 2\eta^2 \mathcal{C}^3,
\end{equation*}
which is the Rank-Crank PDE.

For $l=5$, we find from \eqref{defT} ($F_0=1$)
\begin{equation*}
\begin{split}
F_4 (\tau) &= -\frac{11}{3600} -\frac{11}{15} q + \mathcal{O}(q^2),\\
F_2 (\tau) &= \mathcal{O}(q^2),
\end{split}
\end{equation*}
and hence we can identify them as
\begin{equation*}
F_4 = -\frac{11}{3600} E_4 \qquad \text{and} \qquad F_2=0.
\end{equation*}
So
\begin{equation*}
f_0 = \frac{1}{100}, \qquad f_2 =0 \qquad \text{and}\qquad f_4 = -\frac{11}{3600} E_4.
\end{equation*}
If we put this into Theorem \ref{main} and multiply by 100 we get
\begin{equation*}
\left(\Het_{3} \Het_{1} -\frac{11}{36}E_4\right) A_5 =24 \eta^5 \mathcal{C}^5.
\end{equation*}
We now rewrite this in terms of $G_5$:
\begin{equation*}
\begin{split}
\Het_{1} A_5 &= \Het_{1} \left( \eta^3 G_5\right) \\
&= 10 \left(\D_{3/2} \eta^3\right) G_5 + \eta^3 \Het_{-1/2} G_5 = \eta^3 \Het_{-1/2} G_5,\\
\Het_{3} \Het_{1} A_5 &= \Het_{3} \left( \eta^3 \Het_{-1/2} G_5 \right)\\
&= 10 \left(\D_{3/2} \eta^3\right) \Het_{-1/2} G_5 + \eta^3 \Het_{3/2} \Het_{-1/2} G_5 =  \eta^3 \Het_{3/2} \Het_{-1/2} G_5,
\end{split}
\end{equation*}
and so we get 
\begin{equation*}
\left(\Het_{3/2} \Het_{-1/2} -\frac{11}{36}E_4\right) G_5 =24 \eta^2 \mathcal{C}^5.
\end{equation*}
Using
\begin{equation*}
\Het_{3/2} \Het_{-1/2} = \He^2 +\frac{25}{3} \left( \frac{1}{2\pi i} E_2' -\frac{1}{12} E_2^2 \right)
\end{equation*}
and
\begin{equation*}
\frac{1}{2\pi i} E_2' -\frac{1}{12} E_2^2 = -\frac{1}{12} E_4
\end{equation*}
we see that is equivalent to the statement of Theorem \ref{Garvan}.

For $l=7$ we find
\begin{equation*}
F_6 = \frac{85}{74088} E_6,\qquad F_4= -\frac{5}{252} E_4, \qquad F_2=0 \qquad \text{and}\qquad F_0=1.
\end{equation*}

For $l=9$
\begin{equation*}
F_8 = -\frac{253}{559872} E_8,\qquad F_6 = \frac{53}{5832} E_6,\qquad F_4= -\frac{13}{216} E_4, \qquad F_2=0, \qquad F_0=1.
\end{equation*}

For $l=11$
\begin{equation*}
\begin{split}
F_{10} &= \frac{7888}{39135393} E_{10},\qquad F_8 = -\frac{6151}{1724976} E_8,\qquad F_6 = \frac{295}{8712} E_6,\\
F_4 &= -\frac{53}{396} E_4, \qquad F_2=0, \qquad F_0=1.
\end{split}
\end{equation*}

And for $l=13$
\begin{equation*}
\begin{split}
F_{12} &= -\frac{1462986875}{14412774445056} E_{12}+\frac{170060275}{5683867488} \Delta, \qquad F_{10} = \frac{377735}{296120448} E_{10},\\
F_8 &= -\frac{621665}{45556992} E_8,\qquad F_6 = \frac{3281}{36504} E_6, \qquad F_4 = -\frac{459}{1872} E_4, \qquad F_2=0, \qquad F_0=1.
\end{split}
\end{equation*}

\end{document}